\newtheorem{theorem}{Theorem}
\newtheorem{lemma}[theorem]{Lemma}
\newtheorem{cor}[theorem]{Corollary}
\newtheorem{prop}[theorem]{Proposition}
\theoremstyle{definition}
\newtheorem{definition}[theorem]{Definition}
\newtheorem{conjecture}[theorem]{Conjecture}
\newtheorem{remark}[theorem]{Remark}
\theoremstyle{remark}
\DeclareMathOperator{\lcc}{lcc}
\DeclareMathOperator{\scp}{scp}
\DeclareMathOperator{\cp}{cp}
\title{\bf Clique Coverings and Claw-free Graphs}
\author{Csilla Bujt\'{a}s\thanks{ Department of Computer Science and Systems Technology,
University of Pannonia, Veszpr\'em, Hungary;
 and
  MTA R\'enyi Institute, Budapest,
 Hungary. E-mail: {\tt bujtas@dcs.uni-pannon.hu}, {\tt
tuza@dcs.uni-pannon.hu}}
 \qquad Akbar Davoodi\thanks{ Department of
Mathematical Sciences, Isfahan University of Technology,
84156-83111, Isfahan, Iran. E-mail: {\tt
akbar.davoodi@math.iut.ac.ir}}
  \qquad Ervin Gy\H{o}ri\thanks{MTA R\'enyi Institute and Central European University, Budapest,
 Hungary,  Research partially supported by the NKFIH Grant 116769, E-mail: {\tt gyori.ervin@renyi.mta.hu}}
  \qquad Zsolt Tuza$^*$\,\\[4pt]
}
\date{}
\begin{document}

\maketitle

\begin{abstract}
Let $\cal C$ be a clique covering for $E(G)$ and let $v$ be a vertex of $G$.
 The valency of vertex $v$ (with respect to $\cal C$), denoted by $val_{\cal C}(v)$, is the number of cliques in $\cal C$ containing $v$. The local clique cover number of $G$, denoted by $\lcc(G)$, is defined as the smallest integer $k$, for which there exists a clique covering for $E(G)$ such that $val_{\cal C}(v)$ is at most $k$, for every vertex $v\in V(G)$.
In this paper, among other results, we prove that if $G$ is a claw-free graph, then $\lcc(G)+\chi(G)\leq n+1$.

  \bigskip\noindent \textbf{Keywords:} edge clique covering; local clique covering; chromatic number; claw-free graph; sigma clique partition; Nordhaus-Gaddum inequality.\\
\textbf{MSC:} 05C70
\end{abstract}
\section{Introduction}
Throughout the paper, all graphs are simple and undirected. By a \textit{clique} of a graph $G$, we mean a subset of mutually adjacent vertices of $ G $ as well as its corresponding complete subgraph. The \textit{size} of a clique is the number of its vertices.
A \textit{clique covering} for $E(G)$ is defined as a family of cliques of $G$ such that every edge of $G$ lies in at least one of the cliques comprising this family.

Let $\cal C$ be a clique covering for $E(G)$ and let $v$ be a vertex of $G$. {\it Valency} of vertex $v$ (with respect to $\cal C$), denoted by $val_{\cal C}(v)$, is defined to be the number of cliques in $\cal C$ containing~$v$. A number of different variants of the clique cover number have been investigated in the literature.
The {\it local clique cover number} of $G$, denoted by $\lcc(G)$, is defined as the smallest integer $k$, for which there exists a clique covering for $G$ such that $val_{\cal C}(v)$ is at most $k$, for every vertex $v\in V(G)$.

This parameter may be interpreted as a variety of different invariants of the graph
 and the problem relates to some well-known problems such as line graphs of hypergraphs, intersection representation and Kneser representation of graphs.
For example, $\lcc(G)$ is the minimum integer $k$ such that $G$ is the line graph of a $k$-uniform hypergraph. By this interpretation, $\lcc(G)\leq 2$ if and only if $G$ is the line graph of a multigraph.

There is a characterization by a list of seven forbidden induced subgraphs and
a polynomial-time algorithm for the recognition that $G$ is the line graph of a multigraph \cite{Bermond, Lehot}.
 On the other hand, L. Lov\'{a}sz in \cite{Lovasz} proved that there is no characterization by a finite list of forbidden induced subgraphs for the graphs which are line graphs of some $3$-uniform hypergraphs. Moreover, it was proved that the decision problem whether $G$ is the line graph of a $k$-uniform hypergraph, for fixed $k\geq4$, and the problem of recognizing line graphs of $3$-uniform hypergraphs without multiple edges are NP-complete \cite{Poljak}.

 For a vertex $v\in V(G)$, its {\it (open) neighborhood} $N(v)$ is the set of all neighbors of $v$ in $G$, while its {\it closed neighborhood}
  $N[v]$ is defined as
 $N[v]:=N(v) \cup \{v\}$. Moreover, let $\overline{G}$ stand for the complement of $G$, and let $\Delta(G)$ and $\delta(G)$ be the maximum degree and the minimum degree of $G$, respectively.
The subgraph induced by a set $Y\subset V(G)$ will be denoted by $G[Y]$.
By the notations of $\alpha(G)$, $\omega(G)$, and $\chi(G)$ we mean the independence number, clique number, and chromatic number of $G$, respectively.

 In $ 1956 $ E. A. Nordhaus and J. W. Gaddum proved the following theorem for the chromatic number of a graph $ G $ and its complement, $ \overline{G} $.
\begin{theorem}{\em\cite{nord}}\label{thm:NG}
Let $ G $ be a graph on $ n $ vertices. Then
$ 2\sqrt{n}\leq\chi(G)+\chi(\overline{G})\leq n+1 $.
\end{theorem}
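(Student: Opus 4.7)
I will prove the two bounds separately, handling the easier lower bound first and then the more delicate upper bound.

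For the lower bound $\chi(G)+\chi(\overline G)\ge 2\sqrt n$, the plan is to first establish the multiplicative inequality $\chi(G)\cdot\chi(\overline G)\ge n$ and then apply AM--GM. Let $k=\chi(G)$ and fix any proper $k$-coloring of $G$ with color classes $V_1,\dots,V_k$. Each $V_i$ is an independent set in $G$, hence a clique in $\overline G$; in particular $\omega(\overline G)\ge\max_i|V_i|\ge n/k$ by averaging. Since $\chi(\overline G)\ge\omega(\overline G)$, we get $k\cdot\chi(\overline G)\ge n$, and then $\chi(G)+\chi(\overline G)\ge 2\sqrt{\chi(G)\chi(\overline G)}\ge 2\sqrt n$.

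For the upper bound $\chi(G)+\chi(\overline G)\le n+1$, I plan to use induction on $n$, with the trivial base $n=1$. For the inductive step, the key observation is a degree-versus-chromatic dichotomy at a single vertex: if there exists $v\in V(G)$ with $d_G(v)\le \chi(G-v)-1$, then in any optimal coloring of $G-v$ the neighbors of $v$ use strictly fewer than $\chi(G-v)$ colors, so the coloring extends and $\chi(G)\le\chi(G-v)$; an analogous statement holds with $G$ replaced by $\overline G$. Combining either conclusion with the trivial $\chi(G)\le\chi(G-v)+1$ (resp.\ for $\overline G$) and the induction hypothesis $\chi(G-v)+\chi(\overline{G-v})\le n$ immediately yields $\chi(G)+\chi(\overline G)\le n+1$.

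It therefore remains to handle the case where no such vertex exists, i.e.\ for every $v$ both $d_G(v)\ge\chi(G-v)$ and $d_{\overline G}(v)\ge\chi(\overline{G-v})$. Using $\chi(G-v)\ge\chi(G)-1$ and $\chi(\overline{G-v})\ge\chi(\overline G)-1$ and summing the two degree inequalities at the same vertex $v$, the identity $d_G(v)+d_{\overline G}(v)=n-1$ forces
\[
 n-1 \;\ge\; \chi(G)+\chi(\overline G)-2,
\]
which is exactly the desired bound, with no induction even needed in this case. I expect the main point to watch is the clean separation of the two cases in the inductive step; once the dichotomy is set up, both branches close almost immediately. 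No further obstacles are anticipated, since the lower bound is essentially AM--GM applied to a counting argument on color classes.
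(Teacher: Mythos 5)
Your proof is correct: the lower bound via $\chi(G)\cdot\chi(\overline G)\ge n$ (a color class of $G$ is a clique of $\overline G$ of size at least $n/\chi(G)$) followed by AM--GM, and the upper bound via the vertex-deletion dichotomy (either some vertex $v$ has $d_G(v)<\chi(G-v)$ or $d_{\overline G}(v)<\chi(\overline{G}-v)$, allowing a coloring of one side to extend for free, or else $n-1=d_G(v)+d_{\overline G}(v)\ge\chi(G-v)+\chi(\overline{G-v})\ge\chi(G)+\chi(\overline G)-2$) are both sound; this is essentially the classical Nordhaus--Gaddum argument. The paper itself gives no proof of this statement --- it is quoted from the cited reference --- so there is nothing in the text to compare your approach against.
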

Later on, similar results for other graph parameters have been found which are known as Nordhaus-Gaddum type theorems. In the literature there are several hundred papers considering inequalities of this type for many other graph invariants. For a survey of Nordhaus-Gaddum type estimates see \cite{SurveyNG}.

In this paper, we consider the following two conjectures on local clique cover number.

\begin{conjecture}{\label{conj1}}
For every graph $G$ on $n$ vertices,
\begin{align}\label{eq:conj1}
\lcc(G)+\lcc(\overline{G})\leq n.
\end{align}
\end{conjecture}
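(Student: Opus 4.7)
Being a Nordhaus--Gaddum type statement, the first thing to try is induction on $n$: we would like to find a vertex $v$ whose deletion decreases $\lcc(G)+\lcc(\overline{G})$ by at least one. The case $n=1$ is trivial since both terms vanish, so the inductive step is where all the work lies.

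A first attempt is to use a simplicial vertex. If $v$ is simplicial in $G$, so that $N_G(v)$ is a clique, then prepending the single clique $\{v\}\cup N_G(v)$ to any optimal cover of $G-v$ yields a cover of $G$ whose maximum valency rises by at most one, hence $\lcc(G)\le \lcc(G-v)+1$. The trouble is $\overline{G}$: the edges of $\overline{G}$ at $v$ form a star into $V\setminus N_G[v]$, and cliques of $\overline{G}$ containing $v$ correspond to independent sets of $G$ inside this set, so the minimum valency of $v$ in any cover of $\overline{G}$ equals $\chi(G[V\setminus N_G[v]])$, which can be nearly $n$. Worse, neither $G$ nor $\overline{G}$ needs to contain a simplicial vertex (as $C_5$ shows). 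A more refined strategy would use the general identity that the minimum valency of $v$ in a clique cover of $G$ is $\chi(\overline{G[N_G(v)]})$ (the clique partition number of the $G$-neighborhood of $v$), pick a vertex minimizing the sum of this quantity with its $\overline{G}$-analogue, and look for simultaneous extensions of inductive covers of $G-v$ and $\overline{G}-v$ that add at most one unit in total. Alternatively, exhibiting a large clique in $G$ or in $\overline{G}$ and recursing on the clique and its complement might give a clean split of the $n$ budget.

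The main obstacle is that $\lcc(G)$ and $\lcc(\overline{G})$ do not respond in tandem to vertex deletion: controlling the jump $\lcc(G)-\lcc(G-v)$ by $+0$ or $+1$ gives almost no information about $\lcc(\overline{G})-\lcc(\overline{G}-v)$, since optimal covers on the two sides are solutions of essentially unrelated clique-partition problems on neighborhoods. For this reason I expect the full statement to be hard (which is presumably why it is stated as a conjecture), and I would first try to establish it within classes where the local parameters $\chi(\overline{G[N_G(v)]})$ and $\chi(G[V\setminus N_G[v]])$ are well-behaved---perfect graphs, graphs of bounded independence or clique number, or the claw-free graphs treated elsewhere in this paper---as evidence for the conjecture and as stepping stones toward a general proof.
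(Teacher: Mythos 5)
This statement is an open conjecture; the paper offers no proof of it, only partial results. Your proposal correctly recognizes this: you do not claim a complete argument, you explain why the natural induction-on-$n$ strategy breaks down (the valencies of a deleted vertex in optimal covers of $G$ and $\overline{G}$ are governed by two essentially unrelated clique-partition problems, namely $\chi(\overline{G[N_G(v)]})$ and $\chi(G[V\setminus N_G[v]])$), and you propose establishing the inequality on restricted classes as evidence. This matches the paper's actual treatment quite closely: the authors prove the conjecture for graphs with $\alpha(G)=2$ (equivalently, when $\overline{G}$ is triangle-free) via an explicit cover built from a minimum-degree vertex, and they prove only the weaker Conjecture~\ref{conj2} for claw-free graphs. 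Your suggested stepping stones---bounded independence number and claw-free graphs---are precisely the ones pursued in the paper, so your assessment of where progress is feasible is accurate. Just be aware that nothing in your proposal (nor in the paper) settles the general statement, so it should not be presented as a proof of Conjecture~\ref{conj1} itself.
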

This conjecture proposed by R. Javadi, Z. Maleki and B. Omoomi in $2012$. Note that Conjecture \ref{conj1} is a Nordhaus-Gaddum type inequality concerning the local clique cover number of $G$.

The second author with R. Javadi and B. Omoomi suggested the following weakening of Conjecture 2.
\begin{conjecture}{\label{conj2}}
For every graph $G$ on $n$ vertices,
\begin{align}\label{eq:conj2}
\lcc(G)+\chi(G)\leq n+1.
\end{align}
\end{conjecture}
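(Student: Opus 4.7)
The plan is to first prove a \emph{pointwise} version of the inequality, namely $\theta(G[N(v)])+\chi(G)\leq n+1$ for every vertex $v$, where $\theta(H)$ denotes the minimum number of cliques needed to partition the vertices of $H$. The relevance is the standard lower bound
\[
val_{\mathcal{C}}(v)\ \geq\ \theta(G[N(v)]),
\]
valid for any clique cover $\mathcal{C}$ of $E(G)$ and every $v\in V(G)$: each clique of $G$ containing $v$ has the form $\{v\}\cup Q$ with $Q$ a clique in $G[N(v)]$, and the family of $Q$'s must cover $N(v)$. Hence $\lcc(G)\geq \max_{v}\theta(G[N(v)])$, and the pointwise inequality would yield the conjecture \emph{provided} this lower bound can be realized globally.

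The pointwise inequality follows from two ingredients. First, the Nordhaus--Gaddum bound applied to $H=G[N(v)]$ (which has $\deg(v)$ vertices) gives
\[
\theta(G[N(v)])\ =\ \chi(\overline{H})\ \leq\ \deg(v)+1-\chi(G[N(v)]).
\]
Second, $G$ admits a proper coloring that uses the $\chi(G[N(v)])$ colors on $N(v)$, one new color for $v$, and a distinct fresh color for each of the $n-1-\deg(v)$ vertices in $V\setminus N[v]$, yielding $\chi(G)\leq \chi(G[N(v)])+n-\deg(v)$. Adding the two bounds, the terms $\deg(v)$ and $\chi(G[N(v)])$ cancel and one is left with $\theta(G[N(v)])+\chi(G)\leq n+1$.

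The main obstacle is the local-to-global step: producing a single clique cover $\mathcal{C}$ whose maximum valency actually meets the pointwise lower bound at every vertex simultaneously, rather than only at one vertex at a time. A natural attempt processes vertices greedily: pick $v^*$ maximizing $\theta(G[N(v^*)])$, cover its incident edges by the cliques $\{v^*\}\cup Q_j$ obtained from a minimum clique partition $Q_1,\ldots,Q_t$ of $N(v^*)$, then recurse on $G-v^*$. One hopes that cliques added in later rounds contribute only a bounded amount of extra valency at already-handled vertices, but controlling this accumulated cost is equivalent to coordinating the independent clique partitions chosen at different vertices so that they align into a globally valid cover. This coordination is where I expect a genuinely new combinatorial idea to be needed.

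As a fallback, one can try induction on $n$, removing a vertex $v$ and splitting on whether $\chi(G-v)=\chi(G)-1$ or $\chi(G-v)=\chi(G)$. In the first subcase the inductive bound on $G-v$ has a full unit of slack that can absorb the extra valency from covering the edges at $v$ greedily via a minimum clique partition of $N(v)$. The harder subcase, $\chi(G-v)=\chi(G)$ for every $v$ --- i.e., $G$ is color-critical --- is the genuine obstruction, and one must exploit structural properties of color-critical graphs (minimum degree at least $\chi-1$, no clique cut set, and so on) together with a delicate clique-cover extension. Given that the paper resolves the inequality only for claw-free graphs and leaves the general statement as a conjecture, I expect this induction to hit a real obstacle in the color-critical case and to require essentially new structural insight to push through.
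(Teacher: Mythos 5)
The statement you were asked to prove is stated in the paper as \emph{Conjecture}~\ref{conj2}: the paper does not prove it in general, and neither do you. Your proposal is best read as an honest partial analysis rather than a proof, and to your credit you say so explicitly. The parts you do prove are correct: the lower bound $val_{\mathcal C}(v)\ge\theta(G[N(v)])$ is valid (the traces of the cliques of $\mathcal C$ through $v$ cover the vertex set of $G[N(v)]$), and the pointwise inequality $\theta(G[N(v)])+\chi(G)\le n+1$ follows cleanly from Nordhaus--Gaddum applied to $G[N(v)]$ together with the coloring that reuses the $\chi(G[N(v)])$ colors on the neighborhood and spends fresh colors elsewhere. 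This is essentially the same chain of inequalities the paper uses in the proof of its Proposition on $\frac{\Delta}{\omega-1}+\chi(G)\le n+1$ (there the authors write $\frac{\Delta}{\omega-1}\le\theta(G[N(x)])\le\theta(G)\le n+1-\chi(G)$, invoking Nordhaus--Gaddum globally rather than on the neighborhood); your version is the slightly sharper local form. But the gap you identify --- passing from the pointwise lower bound $\max_v\theta(G[N(v)])$ to $\lcc(G)$ itself --- is precisely the entire open content of the conjecture. The quantity $\lcc(G)$ can strictly exceed $\max_v\theta(G[N(v)])$ (if the two always coincided, $\lcc(G)\le 2$ would characterize exactly the graphs whose neighborhoods split into two cliques, which is false: the forbidden-subgraph characterization of line graphs of multigraphs cited in the paper contains obstructions beyond $K_{1,3}$), so neither your greedy local-to-global scheme nor your color-critical induction closes the argument, and you correctly anticipate that both stall.

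For comparison, what the paper actually proves is the conjecture restricted to claw-free graphs (Theorem~\ref{thm:claw-free}), by induction on $n$: when $\alpha(G)\ge 3$ it deletes an independent triple $T$, covers the edges at each $u\in T$ by a minimum clique partition of $N(u)$ extended by $u$, and uses claw-freeness to guarantee that each surviving vertex gains at most $2$ in valency while $\chi$ drops by at least~$1$; when $\alpha(G)=2$ it invokes the splittability theorem of Balogh, Kostochka, Prince and Stiebitz to peel off an edge $\{u_1,u_2\}$ with $\chi(G-\{u_1,u_2\})\ge\chi(G)-1$ and again covers the new edges with few cliques per vertex. If you want to make progress beyond your pointwise bound, that inductive template --- deleting a small independent set whose removal provably decreases $\chi$, and bounding the valency increment at old vertices by the size of their intersection with the deleted set --- is the mechanism to study; the claw-free hypothesis is exactly what caps that increment at $2$.
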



Let $G_1$ and $G_2$ be graphs with disjoint vertex sets $V(G_1)$ and
$V(G_2)$ and edge sets $E(G_1)$ and $E(G_2)$.
The disjoint union of $G_1$ and $G_2$, denoted by $G_1\dot\cup G_2$, is the graph with vertex set $V(G_1)\cup V(G_2)$ and edge set $E(G_1)\cup E(G_2)$.

\begin{lemma}\label{lem:conj1=>conj2}
Let $\cal G$ be a family of graphs which is closed under the operation of taking disjoint union with an isolated vertex. If Conjecture \ref{conj1} is true for every  $G\in{\cal G}$, then Conjecture \ref{conj2} is also true for every  $G\in{\cal G}$.
\end{lemma}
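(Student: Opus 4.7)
The plan is to bridge from Conjecture~\ref{conj1} to Conjecture~\ref{conj2} by adjoining an isolated vertex. Given $G \in \mathcal{G}$ on $n$ vertices, I form $G' := G \dot\cup \{v\}$, which by hypothesis again lies in $\mathcal{G}$ and has $n+1$ vertices. Applying Conjecture~\ref{conj1} to $G'$ yields $\lcc(G') + \lcc(\overline{G'}) \leq n+1$. It then suffices to establish the two estimates $\lcc(G') = \lcc(G)$ and $\lcc(\overline{G'}) \geq \chi(G)$; combining them gives the desired bound $\lcc(G) + \chi(G) \leq n+1$.

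The first identity is essentially free: since $v$ is isolated in $G'$, the common edge set $E(G) = E(G')$ is covered by exactly the same families of cliques, and one may always take valency $0$ at $v$, so $\lcc(G') = \lcc(G)$.

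The substantive step is the inequality $\lcc(\overline{G'}) \geq \chi(G)$. The key observation is that $v$ is universal in $\overline{G'}$, being adjacent there to every vertex of $V(G)$. Fix a clique covering $\mathcal{C}$ of $E(\overline{G'})$ with all valencies at most $k := \lcc(\overline{G'})$, and let $\{v\} \cup S_1, \ldots, \{v\} \cup S_m$ enumerate those members of $\mathcal{C}$ that contain $v$, where $m = val_{\mathcal{C}}(v) \leq k$. Each $S_i \subseteq V(G)$ is a clique in $\overline{G'}$, hence an independent set in $G$. Moreover, every edge $vu$ with $u \in V(G)$ lies in $E(\overline{G'})$ and must be covered, necessarily by one of the cliques through $v$; thus $S_1 \cup \cdots \cup S_m = V(G)$. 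These independent sets form a proper $m$-coloring of $G$, and therefore $\chi(G) \leq m \leq k = \lcc(\overline{G'})$.

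No real obstacle stands in the way once the correct construction is spotted: adjoining an isolated vertex to $G$ turns the complement into a graph with a universal vertex, and that universality converts a valency bound on $\overline{G'}$ into a bound on the number of independent sets needed to cover $V(G)$, which is precisely $\chi(G)$.
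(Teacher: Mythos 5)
Your proof is correct and follows essentially the same route as the paper: adjoin an isolated vertex $v$, note $\lcc(G\dot\cup\{v\})=\lcc(G)$, apply Conjecture~\ref{conj1} to the augmented graph, and extract $\chi(G)\le val_{\mathcal C}(v)\le\lcc(\overline{G\dot\cup\{v\}})$ from the cliques through the now-universal vertex $v$ in the complement. You merely spell out in more detail the step the paper labels ``clearly,'' namely that the cliques of $\overline{G\dot\cup\{v\}}$ containing $v$ yield a proper coloring of $G$.
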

\begin{proof}
Let $G\in{\cal G}$ and consider the disjoint union $H=G\dot\cup
\{v\}$. Observe that $\lcc(G)=\lcc(H)$. Hence, assuming that each
member of $\cal G$ satisfies Conjecture \ref{conj1}, we have
$\lcc(G)+\lcc(\overline{H}) \le |V(H)|$. Now, fix a clique covering
$\cal C$ for $\overline{H}$. Clearly, $\chi(G)\leq val_{\cal
C}(v)\leq \lcc(\overline{H})$. These two inequalities together imply
$\lcc(G)=\lcc(H)\leq |V(H)|-\lcc(\overline{H})\le |V(G)|+1-\chi(G)$.
\end{proof}

\section{Proof of some variants of the conjectures}

Let $k$ be an integer and let $G$ be a graph such that $k\leq\deg(x)\leq k+1$, for every vertex $x\in V(G)$. Then $\lcc(G)\leq k+1$ and $\lcc(\overline{G})\leq n-1-k$. Thus, inequality \eqref{eq:conj1} holds for $G$. Also, If $G$ is a triangle-free graph, then for a vertex $v$ which has the maximum degree in $G$, $N(v)$ can properly be colored by one color. Thus, $\chi(G)\leq n+1-\Delta(G)$. Since $\lcc(G)=\Delta(G)$, Conjecture \ref{conj2} is true for triangle-free graphs. In what follows we prove that not only \eqref{eq:conj2} but also \eqref{eq:conj1} holds if $\overline{G}$ is triangle-free.

\begin{theorem}\label{thm:alpha=2}
Let $G$ be a graph on $n$ vertices. If $\alpha(G)=2$, then $\lcc(G)+\lcc(\overline{G})\leq n$.
\end{theorem}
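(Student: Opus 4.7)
The plan is to prove the two bounds $\lcc(\overline{G})=n-1-\delta(G)$ and $\lcc(G)\le\delta(G)+1$ separately, then add them. The first bound is immediate: since $\alpha(G)=2$ means $\overline{G}$ is triangle-free, every clique of $\overline{G}$ has at most two vertices, so any clique cover of $\overline{G}$ must contain each edge of $\overline{G}$. The valency of a vertex $x$ in any such cover is therefore at least $\deg_{\overline{G}}(x)$, and equality is attained by the cover consisting of exactly the edges; hence $\lcc(\overline{G})=\Delta(\overline{G})=n-1-\delta(G)$.

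For the second bound I would construct an explicit cover of $G$. Let $v$ be a vertex of minimum degree $d=\delta(G)$, and set $A=N(v)$ and $B=V(G)\setminus N[v]$, so $|A|=d$ and $|B|=n-1-d$. Because $\alpha(G)=2$ and no vertex of $B$ is adjacent to $v$, any two vertices of $B$ must be adjacent; hence $B$ induces a clique in $G$. The proposed covering $\mathcal{C}$ consists of: the clique $B$; for each $u\in A$ the clique $\{u\}\cup(N(u)\cap B)$, which is indeed a clique because $B$ is complete; for each $u\in A$ the edge $\{v,u\}$; and for each edge $uu'$ of $G[A]$ the edge $\{u,u'\}$.

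I would then verify that $\mathcal{C}$ covers $E(G)$---the four families handle edges inside $B$, between $A$ and $B$, between $v$ and $A$, and inside $A$ respectively---and bound its valencies: $v$ lies only in the $d$ pairs $\{v,u\}$, so $val_{\mathcal{C}}(v)=d$; each $u\in A$ lies in one clique of the second family, in the edge $\{v,u\}$, and in $|N(u)\cap A|\le d-1$ edges of $G[A]$, yielding valency at most $d+1$; and each $w\in B$ lies in the clique $B$ together with $|N(w)\cap A|\le d$ cliques of the second family, again at most $d+1$. Adding the two bounds gives $\lcc(G)+\lcc(\overline{G})\le n$. I do not anticipate a serious obstacle; the main points to watch are that $v$ is chosen of minimum degree, which is what aligns $n-1-d$ with the construction's ceiling $d+1$ for $\lcc(G)$, and that the degenerate cases $|B|\le 1$ are handled by simply discarding any listed clique that collapses to a single vertex.
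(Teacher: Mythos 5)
Your proposal is correct and follows essentially the same route as the paper: bound $\lcc(\overline{G})$ by $\Delta(\overline{G})=n-1-\delta(G)$, pick a minimum-degree vertex $v$, use that the non-neighbours of $v$ form a clique $B$ (since $\alpha(G)=2$), and cover $G$ by $B$, the cliques $\{u\}\cup(N(u)\cap B)$ for $u\in N(v)$, and the leftover edges at $v$ and inside $N(v)$ as single-edge cliques, giving $\lcc(G)\le\delta(G)+1$. The valency accounting matches the paper's.
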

\begin{proof}
Clearly, $\lcc(\overline{G})\leq\Delta(\overline{G})=n-1-\delta(G)$. It is enough to show that $\lcc(G)\leq\delta(G)+1$. Let $v$ be a vertex of minimum degree in $G$, and let $K\subset V(G)$ be the set of vertices which are not adjacent to $v$. Since $\alpha(G)=2$, the induced subgraph on $K$, $G[K]$, is a clique in $G$. Now, for every vertex $u_i\in N(v)$, let $C_i:=(N(u_i)\cap K)\cup \{u_i\}$ and define $C_{\delta(G)+1}:=G[K]$. These cliques along with the collection of those edges which are not covered by the cliques $C_1,\ldots, C_{\delta(G)+1}$ comprise a clique covering for $G$, say $\cal C$. It can be checked easily that $val_{\cal C}(v)=\delta(G)$ and $val(x)\leq\delta(G)+1$, for every vertex $x\in V(G)-v$.
\end{proof}
It is well-known that $\frac{n}{\alpha(G)}$ and $\omega(G)$ are
lower bounds for $\chi(G)$, the chromatic number of $G$.  We show
that, if we replace $\chi(G)$ with any of these two general lower
bounds in Conjecture \ref{conj2}, then the inequality holds.

\begin{prop}\label{prop:omega}
Let $G$ be a graph with $n$ vertices. Then $\lcc(G)+\omega(G)\leq
n+1$.
\end{prop}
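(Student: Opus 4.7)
My plan is to mimic and slightly generalize the construction used in the proof of Theorem~\ref{thm:alpha=2}. Fix a maximum clique $K \subseteq V(G)$, so $|K|=\omega := \omega(G)$, and let $v_1,\dots,v_{n-\omega}$ enumerate the vertices of $V(G)\setminus K$. The goal is to exhibit a clique covering $\mathcal{C}$ of $E(G)$ with $\mathrm{val}_{\mathcal{C}}(x) \le n+1-\omega$ for every $x \in V(G)$; this immediately yields $\lcc(G)\le n+1-\omega$ as required.

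The covering I would use consists of three parts: (i) the maximum clique $K$ itself; (ii) for each $v_i \notin K$ with $N(v_i)\cap K \neq \emptyset$, the clique $C_i := \{v_i\}\cup (N(v_i)\cap K)$, which is indeed a clique because $K$ is; (iii) all edges of $G[V(G)\setminus K]$, each taken as a trivial clique of size $2$. Coverage is immediate: edges inside $K$ are covered by $K$; every edge $uv_i$ with $u\in K$ and $v_i\notin K$ lies in $C_i$; and the remaining edges are the ones added in step (iii).

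The heart of the proof is the valency count, which I would split into two cases. For $u\in K$: the only cliques containing $u$ are $K$ itself and those $C_i$ with $v_i \in N(u)\setminus K$, so
\begin{equation*}
\mathrm{val}_{\mathcal{C}}(u) \;=\; 1 + |N(u)\setminus K| \;=\; 1 + \deg(u) - (\omega-1) \;\le\; 1 + (n-1) - (\omega-1) \;=\; n+1-\omega.
\end{equation*}
For $v_i\notin K$: the cliques containing $v_i$ are $C_i$ (at most one) and the trivial edges $\{v_i,v_j\}$ for $v_j\in N(v_i)\setminus K$, giving
\begin{equation*}
\mathrm{val}_{\mathcal{C}}(v_i) \;\le\; 1 + |N(v_i)\setminus K| \;\le\; 1 + (n-\omega-1) \;=\; n-\omega,
\end{equation*}
since $v_i$ itself lies outside $K$ and is not counted among its own neighbors.

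I do not expect a real obstacle here; the bound is tight at vertices of $K$ of full degree, which explains why the construction must absorb as many edges as possible into $K$ and the $C_i$'s rather than leaving them as singletons. The only minor point to get right is that the $C_i$'s are genuine cliques (because all their non-$v_i$ vertices lie in the clique $K$), and that we do not double-count when some $N(v_i)\cap K$ happens to be empty, in which case $C_i$ is simply omitted.
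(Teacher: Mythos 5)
Your proof is correct and follows essentially the same construction as the paper: fix a maximum clique $K$, cover the edges between each outside vertex $v_i$ and $K$ by the clique $(N(v_i)\cap K)\cup\{v_i\}$, take $K$ itself, and leave the remaining edges (those inside $V(G)\setminus K$) as trivial cliques. The valency counts match the paper's, with your bound $\mathrm{val}(u)=1+\deg(u)-(\omega-1)\le n+1-\omega$ for $u\in K$ being a slightly more explicit version of the same estimate.
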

\begin{proof}
Assume that $K\subset V(G)$ is a clique of size $\omega$. For every
vertex $v_i\in V(G)-K$, $1\leq i\leq n-\omega$, define
$C_i:=(N(v_i)\cap K)\cup \{v_i\}$, and let $C_{n-\omega+1}:=G[K]$. Now, let
$F$ be the set of all the edges which are not covered by the cliques
$C_1,\ldots, C_{n-\omega +1}$. Clearly, the cliques
$C_i$ for $1\leq i\leq n-\omega +1$ together  with $F$ form a
clique covering $\mathcal{C}$ for $G$. If $x\in K$, then $val_{\cal C}(x)\leq
1+n-\omega(G)$, and for vertex $v_i\in V(G)-K$, $val_{\cal
C}(v_i)\leq n-\omega(G).$
\end{proof}

Before proving the other inequality $\lcc(G)+\frac{n}{\alpha(G)}\leq
n+1$, we  verify a stronger statement involving local parameters.
Let $\alpha_G(v)=\alpha(G[N(v)])$ be the maximum number of
independent vertices in the neighborhood of vertex $v$, and let
the {\it local independence number} of graph $G$ be defined as
$\alpha_L(G)=\max_{v\in V(G)}\alpha_G(v)$. Clearly, $\alpha_G(v) \le
\alpha_L(G) \le \alpha(G)$. Further, $\alpha_G(v)\ge 1$ holds if and
only if $v$ has at least one neighbor, while  $\alpha_G(v)\le 1$ is
equivalent to that the {\it closed neighborhood} $N_G[v]=N(v) \cup
\{v\}$ induces a clique.
\begin{theorem}\label{thm:alpha-local}
For every graph $G$ of order $n$, there exists a clique covering
$\cal C$ such that for each non-isolated vertex $v \in V(G)$ the
inequality $val_{\cal C}(v)+\frac{n}{\alpha_G(v)}\leq n+1$ holds.
\end{theorem}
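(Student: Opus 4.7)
The approach extends the construction used in the proof of Proposition~\ref{prop:omega}. The key observation is that the target bound $val_{\cal C}(v)\le n+1-n/\alpha_G(v)$ is most restrictive when $\alpha_G(v)=1$ (forcing $val_{\cal C}(v)=1$) but loose when $\alpha_G(v)$ is large; I would therefore handle these two regimes separately.

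First I would show that any two adjacent vertices $v_1\sim v_2$ with $\alpha_G(v_1)=\alpha_G(v_2)=1$ must satisfy $N[v_1]=N[v_2]$, because $N[v_1]$ and $N[v_2]$ are both cliques each containing both $v_1$ and $v_2$, so every $x\in N[v_1]$ is adjacent to $v_2$ and conversely. Consequently every connected component $C$ of $G[\{v:\alpha_G(v)=1\}]$ shares a common closed neighborhood $Q_C:=N[v_0]$ (for any $v_0\in C$), and $Q_C$ is a clique of $G$. I would include $Q_C$ in $\cal C$ for each such component; these ``Type-1'' cliques cover every edge of $G$ incident to any vertex with $\alpha_G=1$.

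For the remaining edges, which lie entirely in the subgraph $G'$ induced by $\{v:\alpha_G(v)\ge 2\}$, I would apply the construction of Proposition~\ref{prop:omega} to $G'$: pick a maximum clique $K$ of $G'$, include both $K$ and the cliques $C_v=(N(v)\cap K)\cup\{v\}$ for $v\in V(G')\setminus K$, and cover any remaining edges by single edges as $K_2$'s. Together with the Type-1 cliques, this yields a clique covering $\cal C$ of $G$.

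For the valency analysis, each $v$ with $\alpha_G(v)=1$ lies in exactly one Type-1 clique and in no Type-2 clique (which are contained in $V(G')$), so $val_{\cal C}(v)=1$ as required. For $v$ with $\alpha_G(v)\ge 2$, $val_{\cal C}(v)$ equals the number of Type-1 components adjacent to $v$ plus the Proposition~\ref{prop:omega} contribution $1+|N_{G'}(v)\setminus K|$. A key estimate is that the number of Type-1 components adjacent to $v$ is at most $\alpha_G(v)$: distinct components have no edge between them in $G$, so picking one neighbor of $v$ from each adjacent component yields an independent set in $N(v)$.

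The main obstacle is to verify the full inequality $val_{\cal C}(v)+n/\alpha_G(v)\le n+1$ in the case $\alpha_G(v)\ge 2$. Naively summing the Type-1 upper bound $\alpha_G(v)$ with the Prop.~\ref{prop:omega} contribution $1+|N_{G'}(v)\setminus K|$ need not be tight enough in general, so the proof likely requires either a smarter choice of $K$ (for instance a maximum clique of $G$ rather than of $G'$) or an averaging argument that decomposes a maximum independent set $I_v\subseteq N(v)$ into its Type-1 and Type-2 parts and controls both contributions simultaneously via a pigeonhole bound on the overlap $|N(v)\cap K|$.
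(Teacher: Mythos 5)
Your first reduction is fine: adjacent vertices with $\alpha_G=1$ do share a closed neighborhood, the resulting ``Type-1'' cliques cover all edges meeting such vertices, and a vertex $v$ meets at most $\alpha_G(v)$ of them. But the second half of your plan --- anchoring on one maximum clique $K$ of $G'$ and covering everything left over by single edges --- is not merely ``not tight enough to analyze''; it produces coverings that actually violate the bound, so the obstacle you flag at the end is a genuine gap rather than a bookkeeping issue. Concretely, take disjoint cliques $A_1,A_2$ of size $m$, a vertex $v$ adjacent to all of $A_1\cup A_2$, vertices $w_1,w_2$ with $w_i$ adjacent to all of $A_i$ (and to nothing else), and, far away, a clique $Q$ of size $m+2$ together with two nonadjacent vertices $z_1,z_2$ each adjacent to all of $Q$. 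Then $n=3m+7$, every vertex of $A_1\cup A_2\cup Q\cup\{v\}$ has $\alpha_G\ge 2$, the Type-1 cliques are $A_i\cup\{w_i\}$ and $Q\cup\{z_j\}$ (none of which contains $v$), and the maximum clique of $G'$ is $Q$, which is disjoint from $N(v)=A_1\cup A_2$. So all $2m$ edges at $v$ survive as leftover $K_2$'s and $val_{\cal C}(v)=2m$, while the theorem demands $val_{\cal C}(v)\le n+1-\tfrac n2=\tfrac{3m+9}{2}$; this fails for every $m\ge 10$, and the violation grows linearly in $n$. Duplicating the $v$-gadget shows that no single ``smarter'' choice of anchor clique can serve all such vertices simultaneously.

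The structural point is that a vertex $v$ with small $\alpha_G(v)$ can have $\deg(v)$ close to $n$ with all neighbors outside any one fixed clique, and spending one clique per uncovered neighbor is a factor of $2$ too expensive. The idea you are missing --- the heart of the paper's Step 3 --- is to cover the uncovered edges at $v$ by partitioning the relevant neighbors into adjacent pairs plus at most $\alpha$ singletons and extending each part through $v$ to a $K_3$ or $K_2$; this costs roughly $\tfrac{\deg(v)}{2}+\tfrac{\alpha_G(v)}{2}$ cliques at $v$, which combined with $\alpha+\tfrac n\alpha\le 2+\tfrac n2$ for $2\le\alpha\le\tfrac n2$ is exactly what closes the inequality. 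The paper embeds this pairing step not in a global construction but in an induction on $n$ that deletes two adjacent vertices at a time, which is what guarantees that the valency of every remaining vertex increases by at most one.
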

\begin{proof}
A clique covering will be called {\it good\/}  if it satisfies the
requirement given in the theorem. Since the statement  is true for
all graphs of order $n \le 3$, we may proceed by induction on $n$.
Let $x$ and $y$ be two adjacent vertices of $G$. By the induction
hypothesis,
  there is a good clique covering, $\cal C'$, for
  $G'=G-\{x,y\}$. We introduce the  notations $N_1:=N(x)-N[y]$, $N_2:=N(y)-N[x]$, and $N_{1,2}:=N(x)\cap N(y)$.
  To obtain a good clique covering $\cal C$ of
  $G$ from $\cal C'$, we perform the following steps.
  \begin{enumerate}
  \item To handle vertices whose neighbors are completely adjacent,
   observe that every vertex $u$ from $N_1 \cup N_2 \cup N_{1,2}$ with $\alpha_G(u)=1$
  and $\deg_{G'}(u)\ge 1$ has $\alpha_{G'}(u)=1$ and hence it is
  covered by the clique $N_{G'}[u]$ in the good covering $\cal C'$. Now, for each such
  vertex $u$, $N_{G'}[u]$ is extended by $x$, by $y$ or by both $x$
  and $y$ respectively, if $u \in N_1$, $u\in N_2$ or $u\in
  N_{1,2}$.
  \item If $\alpha_G(x)=1<\alpha_G(y)$, take the clique $N_G[x]$; if
  $\alpha_G(y)=1<\alpha_G(x)$, take the clique $N_G[y]$; and if
    $\alpha_G(x)=\alpha_G(y)=1$, take the clique
  $N_G[x]=N_G[y]$ into the covering $\cal C$ (if they were not included in step $(1)$).
  \item If there still exist some uncovered edges between $x$ and
  $N_1$, we consider the set $N_1'= \{v\in N_1 \mid xv \mbox{ is
  uncovered}\}$ and partition it into some number of
   adjacent vertex pairs (inducing independent edges) and
  at most $\alpha(G(N_1'))$ isolated vertices. Then, we extend each of
  them with $x$ to a $K_3$ or $K_2$, and insert these cliques into the covering $\cal
  C$. This way, we get at most $\frac{|N_1'|-\alpha(G(N_1'))}{2}+\alpha(G(N_1'))$ new cliques.
  Then, we define  $N_2'$ and $N_{1,2}'$ analogously, and do the corresponding partitioning   procedure for $N_2'$ and
  $N_{1,2}'$,
  extending every part of those partitions with $y$ or with $\{x,y\}$, respectively.
  \item If the edge $xy$ remained uncovered, we take it as a clique
  into the covering $\cal C$.
  \end{enumerate}

  It is easy to check that $\cal C$ is a clique covering in $G$. We
  prove that it is good.

  First note that after performing Step $1$, each vertex $v\in
  V(G)-\{x,y\}$ has the same valency as in $\cal C'$. Moreover, if
  two adjacent vertices, say $u$ and $x$, have $\alpha_G(u)=
  \alpha_G(x)= 1$, then $N_G[u]=N_G[x]$ must hold.
  Hence, if $u \in V(G)-\{x,y\}$ and $\alpha_G(u)=1$, then
  $u$ is incident with only one clique from $\cal C$. Thus,
  $val_{\cal C}(u)+\frac{n}{\alpha_G(u)}=1+n$.
  If $v$ is a vertex from $V(G)-\{x,y\}$ and $\alpha_G(v)\ge 2$,
  then the valency of $v$ might increase in Step $2$ or $3$, but not in
  both. Therefore, $val_{\cal C}(v) \le val_{\cal C'}(v)+1$, and clearly $\alpha_{G'}(v)\le \alpha_{G}(v)$.
  Since $\cal C'$ is assumed to be good, these  facts together imply
 $$val_{\cal C}(v)+\frac{n}{\alpha_G(v)}\leq val_{\cal C'}(v)+1 +
 \frac{n-2}{\alpha_{G'}(v)}+\frac{2}{\alpha_{G}(v)} \le n+1.$$

 Now, consider the vertex $x$. If $\alpha_G(x)= 1$, it is covered by
 only one clique (induced by its closed neighborhood), which was
 added to $\cal C$ in Step $1$ or $2$.
In this case $val_{\cal C}(x)+\frac{n}{\alpha_G(x)}=n+ 1$.
 Also if $\alpha_G(x)\ge \frac{n}{2}$, the trivial bound  $val_{\cal
 C}(x) \le \deg(x)\le n-1$ implies the desired inequality.
Hence, we may suppose $2 \le \alpha_G(x)< \frac{n}{2}$.

Let us denote by $s$ the number of cliques covering $x$ which were
added to $\cal C$ in Step~$1$.  Choose one vertex $u_i$ with
$\alpha_{G}(u_i)=1$ from each of these $s$ cliques. The closed
neighborhoods $N[u_i]$ are pairwise different cliques. Thus, the
obtained vertex set $S$ is independent. By the definitions of $N_1'$ and
$N_{1,2}'$, there exist no edges  between $S$ and $N_1' \cup
N_{1,2}'$. Thus,
  $\alpha(G(N_1')) \le \alpha_G(x)-s$ and
  $\alpha(G( N_{1,2}')) \le \alpha_G(x)-s$.
  Also, $|N_1'|+ |N_{1,2}'| \le |N_1|+ |N_{1,2}|-s=\deg(x)-1-s$
  follows.

\begin{itemize}
 \item If $N_{1,2}\neq\emptyset$ and
$\alpha_G(y)>1$, then
 \begin{align*}
 val_{\cal C}(x)&\leq\frac{|N_1'|-\alpha(G(N_1'))}{2}+\alpha(G(N_1'))+\frac{|N_{1,2}'|-\alpha(G(N_{1,2}'))}{2}+\alpha(G(N_{1,2}'))+s\\
 &=\frac{|N_1'|+ |N_{1,2}'|}{2}+ \frac{\alpha(G(N_1'))+
 \alpha(G(N_{1,2}'))}{2}+s\\
 &\le \frac{\deg(x)-1-s}{2}+\frac{2\alpha_G(x)-2s}{2}+s \leq \frac{n-2}{2}+\alpha_G(x).
 \end{align*}
 On the other hand, our assumption $2\le \alpha_G(x)<\frac{n}{2}$
 implies that
 $\alpha_G(x)+\frac{n}{\alpha_G(x)}\leq 2+\frac{n}{2}$.
 Thus,
 $$val_{\cal C}(x)+\frac{n}{\alpha_G(x)}\leq\frac{n-2}{2}+\alpha_G(x)+\frac{n}{\alpha_G(x)}\leq
  \frac{n-2}{2}+2+\frac{n}{2}=n+1.$$

\item
If $N_{1,2}\neq\emptyset$ and $\alpha_G(y)=1$, all edges between
$N_{1,2}$ and $x$ are covered by the clique $N_G[y]$, which was
added to $\cal C$ in Step $2$ (or maybe earlier, in Step $1$).
Hence, $N_{1,2}'=\emptyset$ and we have
 \begin{align*}
 val_{\cal C}(x)&\leq\frac{|N_1'|-\alpha(G(N_1'))}{2}+\alpha(G(N_1'))+1+s\\
 &=\frac{|N_1'|}{2}+ \frac{\alpha(G(N_1')) }{2}+1+s\\
 & \le \frac{\deg(x)-1-s}{2}+\frac{\alpha_G(x)-s}{2}+1+s \leq
  \frac{n-2}{2}+\alpha_G(x).
 \end{align*}
Again, we may conclude $val_{\cal C}(x)+\frac{n}{\alpha_G(x)}\leq
n+1$.
\item
If $N_{1,2}=\emptyset$, the clique $xy$ was added to $\cal C$ in
Step $4$, and  the same estimation holds as in the previous case.
\end{itemize}

One can show similarly that $val_{\cal
C}(y)+\frac{n}{\alpha_G(y)}\leq n+1$. This completes the proof.
\end{proof}

Since for every  $v \in V(G)$, $\alpha_G(v)\le \alpha_L(G)\le
\alpha(G)$, we have the following immediate consequences.

\begin{cor}\label{cor:alpha}
Let $G$ be a graph of order $n$. Then
\begin{itemize}
\item[$(i)$]
$\lcc(G)+\frac{n}{\alpha_L(G)}\leq n+1$;
\item[$(ii)$]
$\lcc(G)+\frac{n}{\alpha(G)}\leq n+1$.
\end{itemize}
\end{cor}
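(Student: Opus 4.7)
The plan is to derive Corollary \ref{cor:alpha} directly from Theorem \ref{thm:alpha-local}, using only the pointwise inequalities $\alpha_G(v) \le \alpha_L(G) \le \alpha(G)$ that are recorded in the paragraph preceding the theorem. In particular, no further clique covering needs to be constructed; the one supplied by Theorem \ref{thm:alpha-local} will already witness both bounds.

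Concretely, I would first fix a clique covering $\mathcal{C}$ of $G$ guaranteed by Theorem \ref{thm:alpha-local}, so that every non-isolated vertex $v\in V(G)$ satisfies $val_{\mathcal{C}}(v) + \frac{n}{\alpha_G(v)} \le n+1$, and hence $val_{\mathcal{C}}(v) \le n+1-\frac{n}{\alpha_G(v)}$. Because $\alpha_G(v)\le \alpha_L(G)$, reciprocating reverses the inequality, so
\[
val_{\mathcal{C}}(v) \;\le\; n+1-\frac{n}{\alpha_L(G)}
\]
for every non-isolated $v$. Isolated vertices have valency $0$ in any clique covering, so they trivially satisfy the same bound whenever $\alpha_L(G)\ge 1$, i.e.\ whenever $G$ has at least one edge (the edgeless case being immediate with $\lcc(G)=0$ and $\alpha(G)=n$). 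Taking the maximum over $v$ then gives $\lcc(G)\le n+1-\frac{n}{\alpha_L(G)}$, which is part~$(i)$.

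For part~$(ii)$, I would run the identical argument with the chain $\alpha_G(v)\le \alpha(G)$ used in place of $\alpha_G(v)\le \alpha_L(G)$; equivalently, $(ii)$ may be obtained from $(i)$ by monotonicity, since $\alpha_L(G)\le \alpha(G)$ yields $\frac{n}{\alpha_L(G)}\ge \frac{n}{\alpha(G)}$.

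Since Theorem \ref{thm:alpha-local} does all the real work, I do not foresee any substantive obstacle. The only detail that requires a moment of attention is the treatment of isolated vertices (and the degenerate edgeless graph), but this reduces to a single observation about clique valencies and is handled in one line.
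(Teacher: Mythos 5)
Your proposal is correct and follows exactly the paper's route: the corollary is stated there as an immediate consequence of Theorem~\ref{thm:alpha-local} via the chain $\alpha_G(v)\le\alpha_L(G)\le\alpha(G)$, which is precisely your argument. Your extra care with isolated vertices and the edgeless case is a harmless (and slightly more thorough) addition.
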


On the other hand,  $val_{\cal C}(v)\geq \alpha_G(v)$, for every
arbitrary clique covering $\cal C$. Hence, $\lcc(G)\geq
\alpha_L(G)$. (But $\lcc (G) < \alpha(G)$ may be true.) Also,
it is easy to see that $\lcc(G)\geq\frac{\Delta(G)}{\omega-1}$.
 Next we observe that replacing $\lcc(G)$ with
$\alpha(G)$ or $\frac{\Delta(G)}{\omega-1}$ in Conjecture
\ref{conj2}, valid inequalities are obtained.
\begin{prop}
	If $G$ is a graph on $n$ vertices, then
	\begin{enumerate}
\item $\frac{\Delta(G)}{\omega-1}+\chi(G)\le n+1$, and equality holds if and only if
$G$ is the complete graph $K_n$ or the star $K_{1,n-1}$;

\item $\alpha(G)+\chi(G)\leq n+1$, and equality holds if and only if there
exists a vertex $v \in V(G)$ such that $N(v)$ induces a complete graph and
$V(G)\setminus N(v)$ is an independent set.
\end{enumerate}
\end{prop}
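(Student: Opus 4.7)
My plan is as follows. Part (2) I handle with the standard maximum–independent–set coloring: take an independent set $I$ with $|I|=\alpha$, color all of $I$ with one color and give each of the $n-\alpha$ remaining vertices its own distinct color. The resulting proper $(n-\alpha+1)$-coloring proves $\chi(G)+\alpha(G)\le n+1$. For the equality characterization, suppose $\chi(G)=n-\alpha+1$. Then $G[V(G)\setminus I]$ must itself require $n-\alpha$ colors---so it is $K_{n-\alpha}$---and some $u\in I$ must be adjacent to every vertex of this clique, since otherwise $u$ could join one of the existing color classes and drop $\chi$. Setting $v:=u$, one reads off $N(v)=V(G)\setminus I$ (a clique) and $V(G)\setminus N(v)=I$ (independent). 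The converse is immediate from $\alpha\ge n-\deg(v)$ and $\chi\ge\deg(v)+1$.

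For part (1) I would pick a vertex $v$ of maximum degree and start from the inequality
\begin{equation*}
\chi(G)\le \chi(G[N(v)])+\chi(G[V(G)\setminus N(v)])\le \chi(G[N(v)])+(n-\Delta),
\end{equation*}
obtained by coloring the two induced subgraphs with disjoint palettes. Since any clique of $G[N(v)]$ is extended by $v$ to a clique of $G$, one has $\omega(G[N(v)])\le \omega-1$, and everything reduces to proving the auxiliary lemma
\begin{equation*}
\chi(H)\le 1+\frac{(k-1)\,|V(H)|}{k}\qquad\text{whenever }\omega(H)\le k.
\end{equation*}
This lemma is the only nontrivial step, and I expect it to be the main obstacle. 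I would prove it by passing to a $\chi(H)$-critical subgraph $H'$: a critical graph satisfies $\delta(H')\ge \chi(H')-1$, hence $(\chi(H)-1)|V(H')|\le 2|E(H')|$, and Tur\'an's bound gives $|E(H')|\le \bigl(1-\tfrac{1}{k}\bigr)|V(H')|^2/2$; the two combined yield the claim. Applied to $H=G[N(v)]$ with $k=\omega-1$ the lemma gives $\chi(G[N(v)])\le 1+(\omega-2)\Delta/(\omega-1)$, and substituting back collapses the chain to $\chi(G)\le n+1-\Delta/(\omega-1)$.

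For the equality analysis I would chase tightness through the three inequalities used. The trivial bound $\chi(G[V(G)\setminus N(v)])\le n-\Delta$ is strict unless $v$ is universal, since otherwise $v$ sits as an isolated vertex of that induced subgraph and lets one save a color; hence $\Delta=n-1$. Tightness in the Tur\'an--critical lemma then splits into two cases: either $\omega-1=1$, so $N(v)$ is edgeless and the universal $v$ forces $G=K_{1,n-1}$; or $\omega-1\ge 2$, in which case the only $k$-critical Tur\'an graph is $K_{k}$, forcing $G[N(v)]=K_{\omega-1}$ with $|V(H)|=\omega-1$, which combined with $\Delta=n-1$ gives $n=\omega$ and $G=K_n$. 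The reverse direction is an immediate substitution check on both $K_n$ and $K_{1,n-1}$.
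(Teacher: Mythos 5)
Your argument is correct, and for part (1) it follows a genuinely different route from the paper. The paper gets the inequality in one line from the chain $\frac{\Delta}{\omega-1}\le\theta(G[N(x)])\le\theta(G)=\chi(\overline G)\le n+1-\chi(G)$ (the last step being Nordhaus--Gaddum), and then handles equality by invoking Finck's characterization of the graphs with $\chi(G)+\chi(\overline G)=n+1$ and running a case analysis on those two families. You instead split $V(G)$ into $N(v)$ and its complement for a maximum-degree $v$, and reduce to the Tur\'an-type bound $\chi(H)\le 1+\frac{(k-1)|V(H)|}{k}$ for $K_{k+1}$-free $H$, proved via a $\chi$-critical subgraph; equality is then chased directly through the three inequalities (the isolated vertex $v$ forces $\Delta=n-1$, and regularity plus uniqueness in Tur\'an forces $G[N(v)]$ to be $K_{\omega-1}$ or edgeless). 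Your version is self-contained modulo Tur\'an's theorem and the standard minimum-degree property of critical graphs, whereas the paper's is shorter for the inequality itself but leans on Finck's external characterization for the equality case. For part (2) your proof is essentially the paper's, except that where the paper deduces the existence of the dominating vertex from the perfection of split (chordal) graphs, you use a direct recoloring argument (assigning each $u\in I$ the color of a non-neighbor in the clique), which is more elementary. Both parts check out.
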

\begin{proof}
To prove (1), first note that it is showed in \cite{Finck} that
there are only two types of graphs $G$ for which $\chi(G)+\chi(\bar G)=n+1$,
\begin{description}
	\item[(a)] if $V(G)=K\cup S$ where $K$ is a clique and $S$ is an independent set,
	sharing a vertex $K \cap S = \{u\}$, or
	\item[(b)] $G$ is obtained from (a) by substituting $C_5$ into $u$.
\end{description} 
Now, we estimate $\frac{\Delta(G)}{\omega-1}+\chi(G)$ as follows. We write $\theta$
for the clique covering number (minimum number of complete subgraphs whose
union is the entire vertex set, that is the chromatic number of the
complemetary graph). Let $x$ be a vertex of degree $\Delta=\Delta(G)$. We have
$$\frac{\Delta}{\omega-1} \le \theta(G[N(x)]) \le \theta(G) \le n+1-\chi(G),$$
where the last inequality is the Nordhaus-Gaddum theorem (Theorem \ref{thm:NG}). Thus, in order to have
$\frac{\Delta}{\omega-1}+\chi=n+1$, it is necessary that $G$ is of type (a)
or (b). We shall see that (b) is not good enough, and (a) yields $G=K_n$ or
$G=K_{1,n-1}$.

Note that equality does not hold for $G=C_5$, therefore in (b) we have $k>0$.
Let $|K-u|=k$ and $|S-u|=s$ in (a). Then after substitution of $C_5$, we have
$n=k+s+5$, $\Delta \le n-1$, $\omega=k+2$ (with $k>0$), and $\chi=k+3$.
Therefore, the most favorable case is $s=0$, because increasing $s$ by 1 makes $n+1$
increase by 1, while the left-hand side of the inequality increases by at
most $1/2$. Hence, in the best case we have $n=k+5 \ge 6$, and
$$\frac{\Delta}{\omega-1} + \chi = \frac{n-1}{n-3} + n-2 < n+1$$
Now, we consider case (a). Here, again we have $k>0$ and $\Delta \le n-1$, moreover
now $n=k+s+1$, $\omega=k+1$, and $\chi=k+1$. Thus
$$\frac{\Delta}{\omega-1} + \chi \le \frac{(k+s)}{k} + k+1 \le k+s+2$$
with equality if and only if $s/k = s$, that is $k=1$ or $s=0$, where for the
case $k=1$ we also have to ensure $\Delta=s+1$. This completes the proof of (1).

To see (2), consider an independent set $A$ of cardinality $\alpha=\alpha(G)$.
A proper $(n-\alpha +1)$-coloring always exists
as we can assign color 1 to all vertices from $A$ and the further
$n-\alpha$ vertices are assigned with pairwise different colors.
Hence, $\chi(G)\le n-\alpha +1$ holds for every graph. Moreover, if the graph
induced by $V(G)\setminus A$ is not complete, we can color it properly by
using fewer than $n-\alpha$ colors that yields a proper coloring of $G$ with
fewer than $n-\alpha+1$ colors.
Therefore, $\chi(G)= n-\alpha +1$ may hold only if $V(G)\setminus A$ induces a
complete graph. In this case, $G$ is a split graph. Since split graphs are chordal and chordal graphs are perfect
\cite{dirac},
 $\omega(G)=\chi(G)=n-\alpha+1$.
Consequently, if (2) holds with equality, there exists a vertex $v\in A$
which is adjacent to  all vertices from $V(G)\setminus A$.
This vertex fulfills our conditions as $N(v)$ is a clique and
$V(G)\setminus N(v)$ is an independent set.

On the other hand, if a vertex $v'$ with such a property exists in $G$,
then the graph cannot be colored with fewer than $|N(v')|+1$ colors.
This implies $\chi= n-\alpha+1$ and completes the proof of the second statement.
\end{proof}

\section{Claw-free graphs}\label{sec:bound}

 Several related problems (say, perfect graph conjecture, to mention just the most famous one) are easier  for {\it claw-free graphs}, i.e.\ for graphs not containing $K_{1,3}$ as an induced subgraph, other problems (say, complexity of finding chromatic number) are not. (For a survey of results on claw-free graphs see e.g.\ \cite{Faudree}.) Concerning local clique cover number, R. Javadi et al.\ showed in \cite{JavadiLCC} that if $G$ is a claw-free graph then $\lcc(G)\leq c\frac{\Delta(G)}{\log(\Delta(G))}$, for a constant $c$. In this section, we are going to prove that
 Conjecture \ref{conj2} does hold for claw-free graphs.

To prove the main result of this section, we use the following
definition and theorem of Balogh et al.\ \cite{balogh}.
\begin{definition}{\cite{balogh}}
A graph $G$ is $(s, t)$-splittable if $V(G)$ can be partitioned into two sets $S$ and $T$
such that $\chi(G[S])\geq s$ and $\chi(G[T])\geq t$. For $2\leq s\leq\chi(G)-1$, we say that $G$ is $s$-splittable if $G$ is $(s,\chi(G)-s+1)$-splittable.
\end{definition}
\begin{theorem}{\em\cite{balogh}}\label{thm:alpha=2}
Let $s\geq2$ be an integer. Let $G$ be a graph with $\alpha(G)=2$ and $\chi(G)>\max\{\omega, s\}$. Then $G$ is $s$-splittable.
\end{theorem}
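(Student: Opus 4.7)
The plan is to proceed by induction on $|V(G)|$, with small cases (such as $G = C_5$ with $k = \chi(G) = 3$, $s = 2$) verified by hand. The first step is to record the structural consequence of $\alpha(G) = 2$: the complement $\overline{G}$ is triangle-free, so for every $v \in V(G)$ the non-neighborhood $K_v := V(G) \setminus N[v]$ induces a clique in $G$. This immediately yields $\omega(G) \geq n - 1 - \deg(v)$, and combined with $\omega(G) < \chi(G) = k$ gives the uniform lower bound $\delta(G) \geq n - k$.

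For the \emph{reducible case} of the induction, I would first search for a vertex $v$ with $\chi(G - v) = k$. Since $G$ is not complete (because $\chi(G) > \omega(G)$), neither is $G - v$, so $\alpha(G - v) = 2$; in addition $\omega(G - v) \leq \omega(G) < k = \chi(G - v)$ and $\chi(G - v) > s$. The inductive hypothesis therefore applies to $G - v$, yielding a partition $V(G-v) = S' \cup T'$ with $\chi((G-v)[S']) \geq s$ and $\chi((G-v)[T']) \geq k - s + 1$; reintroducing $v$ on either side of the partition preserves both chromatic lower bounds.

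The remaining \emph{critical case} is when $\chi(G - v) = k - 1$ for every $v$, so that $\delta(G) \geq k - 1$ by standard criticality. Choosing $v$ of minimum degree, $K_v$ is a clique of chromatic number $|K_v| = n - 1 - \delta(G)$, and the decomposition $\chi(G) \leq \chi(G[N[v]]) + \chi(G[K_v])$ forces $\chi(G[N(v)]) \geq k - |K_v| - 1$. The natural construction is then $S := \{v\} \cup H$ and $T := V(G) \setminus S$, where $H \subseteq N(v)$ is chosen so that $\chi(G[H]) = s - 1$; this automatically gives $\chi(G[S]) \geq s$ since $v$ is adjacent to all of $H$.

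The main obstacle is the matching lower bound $\chi(G[T]) \geq k - s + 1$. This amounts to a strictly super-additive chromatic split $\chi(G[S]) + \chi(G[T]) \geq k + 1$, an inequality that fails for perfect graphs such as $K_n$, so the argument must use $\chi(G) > \omega(G)$ in an essential way. A promising reformulation is via the triangle-free complement: since every color class of a proper coloring has size at most two, $\chi(G) = n - \nu(\overline{G})$, and the splittability task reduces to finding a partition $V(\overline{G}) = S \cup T$ with $\nu(\overline{G}[S]) + \nu(\overline{G}[T]) \leq \nu(\overline{G}) - 1$ under the balance constraint $|S| - \nu(\overline{G}[S]) \geq s$. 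The assumption $\chi(G) > \omega(G) = \alpha(\overline{G})$ rules out rigid matching structures in $\overline{G}$ (such as disjoint unions of stars) and should make it possible to locate an edge of some maximum matching of $\overline{G}$ whose two endpoints can be placed on opposite sides of the desired partition, simultaneously respecting the balance constraint on $S$.
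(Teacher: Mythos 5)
This theorem is not proved in the paper at all: it is quoted verbatim from Balogh, Kostochka, Prince and Stiebitz \cite{balogh}, so there is no in-paper proof to compare your attempt against, and it must be judged on its own. The parts you do carry out are fine: the induction step when some vertex $v$ satisfies $\chi(G-v)=\chi(G)$ is correct (and your check that $\alpha(G-v)=2$ and $\omega(G-v)<\chi(G-v)$ is valid), the identity $\chi(G)=n-\nu(\overline{G})$ for $\alpha(G)=2$ is the right lens, and reformulating $s$-splittability as finding a partition $V=S\cup T$ with $\nu(\overline{G}[S])+\nu(\overline{G}[T])\le\nu(\overline{G})-1$ together with a balance condition on $|S|-\nu(\overline{G}[S])$ is exactly the combinatorial core of the statement.

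That core, however, is precisely where the proposal stops, and the remaining step is the whole theorem. The condition $\nu(\overline{G}[S])+\nu(\overline{G}[T])\le\nu(\overline{G})-1$ quantifies over \emph{all} matchings of the two induced subgraphs: separating the endpoints of one edge of one chosen maximum matching does not prevent $S$ and $T$ from jointly containing some \emph{other} family of $\nu(\overline{G})$ disjoint edges of $\overline{G}$. So ``locate an edge of some maximum matching whose endpoints can be placed on opposite sides'' is a restatement of the difficulty, not an argument; one needs structural control over every maximum matching simultaneously (this is where Gallai--Edmonds-type matching theory and the hypothesis $\chi(G)>\omega(G)=\alpha(\overline{G})$ genuinely enter in \cite{balogh}). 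Two further concrete defects: (i) the balance constraint must be $|S|-\nu(\overline{G}[S])=s$ rather than $\ge s$, since with total matching deficiency exactly one, any excess of $\chi(G[S])$ above $s$ is subtracted from the guaranteed lower bound on $\chi(G[T])$; and (ii) in your critical case the set $H\subseteq N(v)$ with $\chi(G[H])=s-1$ need not exist, because your bounds only give $\chi(G[N(v)])\ge\max\{\lceil\delta(G)/2\rceil,\,k+\delta(G)-n\}$, which can be smaller than $s-1$ when $s$ is close to $k-1$. As written, the proposal establishes the easy reductions but leaves the theorem unproved.
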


Now we prove:

\begin{theorem}\label{thm:claw-free}
Let $G$ be a claw-free graph with $n$ vertices. Then $\lcc(G)+\chi(G)\leq n+1$. Moreover, for every $n\ge 4$, there exist several claw-free graphs with $n$ vertices such that equality holds.
\end{theorem}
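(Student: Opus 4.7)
The plan is to prove $\lcc(G)+\chi(G)\le n+1$ by induction on $n$. The base cases $n\le 3$ are immediate, and the disconnected case is handled by a brief analysis: writing $G=G_1\,\dot\cup\, G_2$ and using that both $\lcc$ and $\chi$ are attained as maxima over the components, if the two maxima are realised in the same component the induction hypothesis applies directly, and if they are realised in different components then the bounds from the hypothesis combine to give a sum of at most $n$. One may therefore assume that $G$ is connected, and proceed by cases on $\alpha(G)$.

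The first case I would attempt is the (easier) one, $\alpha(G)\ge 3$. Choose an independent triple $I=\{a,b,c\}$; the graph $G'=G-I$ is claw-free on $n-3$ vertices, so by the induction hypothesis $\lcc(G')+\chi(G')\le n-2$. Extending a proper $\chi(G')$-colouring by a single new colour that handles $I$ gives $\chi(G)\le \chi(G')+1$. For the covering, start with an optimal clique covering of $G'$, and for each $x\in I$ partition $N_G(x)$ into two cliques $A_x\sqcup B_x$ (possible since claw-freeness forces $\alpha_G(x)\le 2$); then add the six cliques $\{x\}\cup A_x$ and $\{x\}\cup B_x$. No vertex $u\in V(G')$ can be adjacent to all three of $a,b,c$, for otherwise $\{u,a,b,c\}$ would induce a claw, so $u$ lies in at most two of the six new cliques and its valency grows by at most $2$. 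This gives $\lcc(G)\le \lcc(G')+2$, and adding the two bounds yields $\lcc(G)+\chi(G)\le n+1$.

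When $\alpha(G)\le 2$, the case $\alpha(G)=1$ reduces to $G=K_n$, for which $\lcc+\chi=1+n$. Assume $\alpha(G)=2$. If $\chi(G)\le \omega(G)$, Proposition~\ref{prop:omega} gives $\lcc(G)+\chi(G)\le \lcc(G)+\omega(G)\le n+1$ at once. The essential remaining subcase is $\chi(G)>\omega(G)$, where I would invoke Balogh et al.'s splittability theorem with $s=2$: it produces a partition $V(G)=S\sqcup T$ with $\chi(G[S])\ge 2$ and $\chi(G[T])\ge \chi(G)-1$, hence $\chi(G[S])+\chi(G[T])\ge \chi(G)+1$. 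Since $G[S]$ and $G[T]$ are both claw-free and of order less than $n$, summing the two induction bounds yields
\[
\lcc(G[S])+\lcc(G[T])\le (n+2)-(\chi(G[S])+\chi(G[T]))\le n+1-\chi(G).
\]
I expect the main obstacle of the whole argument to lie here: one must build a clique covering of $G$ whose maximum valency is at most $\lcc(G[S])+\lcc(G[T])$, by merging optimal coverings of $G[S]$ and $G[T]$ with a cover of the cross edges. Claw-freeness helps, since for every $v\in S$ the set $N_G(v)\cap T$ has independence number at most $2$ and hence can be packed into two cliques attached to $v$, but the valency bookkeeping on both sides of the partition requires a careful choice of the splitting --- for instance, tuning $s$ so that the smaller side has a very simple structure, ideally inducing a clique.

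For the extremal part, equality $\lcc(G)+\chi(G)=n+1$ is attained by $K_n$ itself ($\lcc=1$, $\chi=n$). To produce further extremal examples for $n\ge 4$, delete from $K_n$ any nonempty proper subset of the edges incident to a single fixed vertex $v$ (so that $v$ retains at least one neighbour): the resulting graph is claw-free (the neighbourhood of every $w\ne v$ has independence number at most $2$ and $N(v)$ is a clique), it has $\chi=n-1$, and the two-clique covering $\{V\setminus\{v\},\,N_G[v]\}$ witnesses $\lcc=2$, so $\lcc+\chi=n+1$. Varying the number of deleted edges (from $1$ up to $n-2$) supplies several non-isomorphic extremal graphs for each $n\ge 4$.
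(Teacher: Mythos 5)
Your overall architecture matches the paper's: induction on $n$, removal of an independent triple when $\alpha(G)\ge 3$, Proposition~\ref{prop:omega} when $\chi\le\omega$, and the Balogh et al.\ splittability theorem with $s=2$ when $\alpha(G)=2$ and $\chi>\omega$. However, there are two genuine gaps. First, in the case $\alpha(G)\ge 3$ you assert that $N_G(x)$ can be partitioned into \emph{two} cliques because $\alpha_G(x)\le 2$. This is false: $\alpha(G[N(x)])\le 2$ only says that $\overline{G[N(x)]}$ is triangle-free, and triangle-free graphs need not be $2$-colourable (if $N(x)$ induces $C_5$, already three cliques are needed, and Mycielski-type examples push $\chi(\overline{G[N(x)]})$ arbitrarily high). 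The paper instead partitions $N(u)$ into $\chi(\overline{G[N(u)]})$ cliques and controls the resulting valency of $u$ itself by $\chi(\overline{G[N(u)]})\le\chi(\overline{G})\le n+1-\chi(G)$ via Nordhaus--Gaddum; your bookkeeping for the vertices \emph{outside} the triple (valency grows by at most $2$ by claw-freeness, since the cliques at each $u$ partition $N(u)$) survives this fix, but the bound on the valency of the triple's vertices does not follow from your argument as written.

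Second, and more seriously, in the case $\alpha(G)=2$, $\chi(G)>\omega(G)$ you stop at exactly the point where the work is: you derive $\lcc(G[S])+\lcc(G[T])\le n+1-\chi(G)$ but openly concede that you cannot merge the two coverings with a cover of the cross edges without losing control of the valencies, and merging really does fail in general (every clique covering a cross edge at $v\in S$ adds to $val(v)$ on top of its valency inside $G[S]$). The paper's resolution is the idea you gesture at but do not carry out: since $\chi(G[S])\ge 2$ one may shrink $S$ to a single edge $\{u_1,u_2\}$ with $\chi(G-\{u_1,u_2\})\ge\chi(G)-1$. Claw-freeness then makes $N_1\cup\{u_1\}$ and $N_2\cup\{u_2\}$ cliques, and $N_{1,2}$ is partitioned into at most $\chi(\overline{G-\{u_1,u_2\}})\le (n-2)+1-\chi(G-\{u_1,u_2\})$ cliques each extended by both $u_1$ and $u_2$; since $N_1$, $N_2$, $N_{1,2}$ are disjoint and the last family is a partition, every vertex of $G-\{u_1,u_2\}$ gains at most one clique, which closes the count. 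Your extremal examples ($K_n$ minus a nonempty proper star at one vertex) are correct and subsume the paper's.
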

\begin{proof} We prove the theorem by induction on $n$.
For small values of $n$, it is easy to check that a claw-free graph with $n$ vertices satisfies the inequality. Also, the assertion is obvious for $\alpha(G)=1$.

Let $G$ be a claw-free graph  on $n$ vertices. First, we consider
the  case that $\alpha(G)\geq 3$. Let $T$ be an independent set of
size three. By the induction hypothesis, $G-T$ has a clique covering
 $\cal C'$ such that every vertex $x \in V(G-T)$ has
\begin{equation}\label{eq:lcc(G-T)}
 val_{\cal C'}(x) \leq (n-3)+1-\chi(G-T)\leq n-2-(\chi(G)-1)=n-1-\chi(G).
\end{equation}

Now, for every vertex $u\in T$, partition $N(u)$ into the $\chi(\overline{G[N(u)]})$ vertex-disjoint cliques. Then, add vertex $u$ to each clique to cover all the edges incident to $u$. These cliques along with cliques in an optimum clique covering of $G-T$ form a clique covering, say $\cal C$, for $G$. Let $u\in T$ and $x\in G-T$. Then we have
\begin{align*}
val_{\cal C}(u)&=\chi(\overline{G[N(u)]})\leq\chi(\overline{G})\leq n+1-\chi(G),\\
val_{\cal C}(x)&\leq val_{\cal C'}(x) +|N_G(x)\cap T|.
\end{align*}
Since $G$ is claw free, $|N_G(x)\cap T|\leq 2$.
 Thus, by Inequality~\eqref{eq:lcc(G-T)}, $\lcc(G)\leq n+1-\chi(G)$.

Consider now the case $\alpha(G)=2$. By Proposition \ref{prop:omega}
we may assume that $\chi(G)>\omega(G)$. Moreover, as the
statement clearly holds when $\chi(G) \le 2$, we may also suppose
that $\chi(G) \ge 3$. Then Theorem \ref{thm:alpha=2} with $s=2$
implies that $V(G)$ can be partitioned into two parts, say $A$ and
$B$, such that $\chi(G[A])\geq2$ and $\chi(G[B])\geq \chi(G)-1$. We
assume, without loss of generality, that $A=\{u_1,u_2\}$, where the
vertices $u_1$ and $u_2$
 are adjacent.
%
 Then $\chi(G-\{u_1,u_2\})\geq \chi(G)-1$.

We will use the  notation $N_1:=N(u_1)-N[u_2]$,
$N_2:=N(u_2)-N[u_1]$, and  $N_{1,2}:=N(u_1)\cap N(u_2)$.
Since $G$ is claw-free,
 $N_i\cup \{u_i\}$ induces a clique for $i=1,2$.
 Starting with an optimal clique covering $\cal C''$ for $G-\{u_1,u_2\}$, we will
 construct  a clique covering  $\cal C$ for
  $G$ such that $val_{\cal C}(v) \le n+1-\chi(G)$ holds for every
  vertex $v$.

  If $N_{1,2}=\emptyset$, then  ${\cal C}:= {\cal C''} \cup \{N_1\cup \{u_1\},
N_2\cup \{u_2\}, \{u_1,u_2\} \}$ is a clique covering for $G$. We
observe that $val_{\cal C} (u_i) \le 2$ holds for $i=1,2$ and
$$val_{\cal C} (v) \le val_{\cal C''} (v)+1 \le n-1-\chi(G
-\{u_1,u_2\})+1 \le n-\chi(G)$$ for each vertex $v$ from
$V(G-\{u_1,u_2\})$. Hence, $\lcc(G)\leq n+1-\chi(G)$.

Otherwise, if  $N_{1,2}\neq \emptyset$, partition $N_{1,2}$ into at
most $\chi(\overline{G-\{u_1,u_2\}})$ cliques and extend each of them with
 the vertices $u_1$ and $u_2$. These cliques together with $N_1\cup
\{u_1\}$, $N_2\cup \{u_2\}$,
   and with the cliques in $\cal C''$ form a clique covering of $G$.
   We show that this clique covering $\cal C$ satisfies $val_{\cal C}(x)\leq n+1-\chi(G)$
    for every vertex $x\in V(G)$.
   Note that $val_{\cal C} (u_1)\leq \chi(\overline{G-\{u_1,u_2\}})+1$, thus the Nordhaus-Gaddum
   inequality for chromatic number implies
    $$val_{\cal C}(u_1)\leq(n-2)+1-\chi(G-\{u_1,u_2\})+1\leq n-\chi(G-\{u_1,u_2\})\leq n+1-\chi(G).$$
   Similarly, we have $val_{\cal C}(u_2)\leq n+1-\chi(G)$. For $v\in V(G-\{u_1,u_2\})$,
$$
val_{\cal C}(v)\leq val_{\cal C''}(v)+1
 \leq (n-2)+1-\chi(G-\{u_1,u_2\})+1
 \leq n-\chi(G)+1 .
  $$

Finally, we note that $K_n$, $K_n-K_2$, and $K_n-K_{1,2}$ are examples of claw-free graphs with $n$ vertices such that $\lcc(G)+\chi(G)=n+1$.
\end{proof}

\section{A Nordhaus-Gaddum type inequality}
A {\it clique partition} of the edges of a graph $ G $ is a family of cliques
 such that every edge of $ G $ lies in exactly one member of the family. The {\it sigma clique partition number} of $G$, $\scp(G)$, is the smallest integer $k$ for which there exists a clique partition of $E(G)$ where  the sum of  the sizes of its cliques is at most $k$.

It was conjectured by G. O. H. Katona and T. Tarj\'{a}n, and proved in the papers
 \cite{chung81,kahn81,gyori80}, that for every graph $G$ on $n$ vertices, $\scp(G)\leq \lfloor n^2/2\rfloor$ holds, with equality if and only if $ G $ is the complete bipartite graph $K_{\lfloor n/2\rfloor, \lceil n/2\rceil } $.

Also, this parameter relates to a number of other well-known problems (see \cite{SCPandPBD}). The second author and R. Javadi proved the following Nordhaus-Gaddum type theorem for $\scp$.

\begin{theorem}{\em\cite{IMC45}}
Let $G$ be a graph with $n$ vertices. Then
\begin{align*}
\frac{31}{50}n^2+O(n)\leq\max\{\scp(G)+\scp(\overline{G})\}\leq \frac{9}{10} n^2+O(n),\\
\frac{12}{125}n^4+O(n^3)<\max\{\scp(G)\cdot
\scp(\overline{G})\}<\frac{81}{400}n^4+O(n^3).
\end{align*}
\end{theorem}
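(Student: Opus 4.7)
The statement consists of four asymptotic inequalities, which we treat in two pairs. The upper bound on the product follows immediately from the upper bound on the sum by AM--GM:
$$\scp(G)\cdot\scp(\overline{G})\le\Bigl(\frac{\scp(G)+\scp(\overline{G})}{2}\Bigr)^{2}\le\Bigl(\frac{9n^2}{20}\Bigr)^{2}=\frac{81\,n^4}{400}.$$
Conversely, the same extremal construction used for the sum lower bound also witnesses the product lower bound: if the construction gives $\scp(G)\approx\scp(\overline{G})\approx \tfrac{31}{100}n^2$, then $\scp(G)\cdot\scp(\overline{G})\approx\tfrac{961}{10000}n^4>\tfrac{960}{10000}n^4=\tfrac{12}{125}n^4$, as required. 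So the real work lies in the two sum bounds.

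For the sum lower bound, the plan is to exhibit an explicit family of graphs $G_n$ for which both $\scp(G_n)$ and $\scp(\overline{G_n})$ approach $\tfrac{31}{100}n^2$. We would consider balanced blow-ups of a fixed small graph $H$ on $h$ vertices, replacing each vertex of $H$ by an independent set (or clique) of size $r=n/h$. The resulting graph $G$ has $\scp$-value that decomposes as a sum of two kinds of contributions: within-part cliques, cheap and contributing only $O(n)$, and complete bipartite pieces $K_{r,r}$ between parts adjacent in $H$; since each such piece is triangle-free, its contribution to $\scp$ equals $2r^2$ by the tight case of the Katona--Tarj\'an bound. A symmetric computation applies to $\overline{G}$, which is the corresponding blow-up of $\overline{H}$. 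A short optimisation over candidate base graphs---where the appearance of $50=2\cdot5^2$ and $125=5^3$ in the denominators suggests $h=5$---yields the coefficient $31/100$.

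For the sum upper bound, we begin with the trivial inequality $\scp(H)\le 2|E(H)|$, which gives only $\scp(G)+\scp(\overline{G})\le n(n-1)$. Saving the required $n^2/10$ amounts to upgrading edges to larger cliques in the partition, since replacing $\binom{r}{2}$ single-edge cliques by one $K_r$-clique saves $r(r-2)$ units of $\scp$. The plan is to regard $(G,\overline{G})$ as a $2$-edge-colouring of $E(K_n)$ and extract an edge-disjoint family of monochromatic cliques whose total saving is $n^2/10+O(n)$. Goodman's theorem guarantees at least $\binom{n}{3}/4$ monochromatic triangles; combined with a greedy edge-disjoint triangle-packing argument this contributes a saving of order $n^2/24$, which alone falls short of the goal.

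This last step is the main obstacle. To lift the saving from $n^2/24$ to $n^2/10$ one must supplement the monochromatic triangles with larger monochromatic cliques---$K_4$'s, books, or polylogarithmic-sized Ramsey cliques---and control their joint contribution. The natural way to carry this out is a case analysis on $\omega(G)$ and $\omega(\overline{G})$: when either is large, directly using a single big monochromatic clique already yields the full saving, and when both are small, a weighted/fractional extraction of many modest monochromatic cliques is required. Pinning down the precise constant $1/10$ in the small-$\omega$ regime is the technical heart of the proof.
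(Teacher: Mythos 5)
First, note that the paper does not prove this theorem: it is quoted from \cite{IMC45}, and the paper only \emph{improves} the two upper bounds (Theorem~\ref{thm:scp}) via the Keevash--Sudakov triangle-packing result. So your attempt can only be judged on its own merits, and as it stands it is a plan with two genuine gaps, one in each direction.

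For the upper bounds, your AM--GM reduction of the product to the sum is fine, and your general strategy for the sum --- view $(G,\overline{G})$ as a $2$-colouring of $K_n$ and extract edge-disjoint monochromatic cliques, each $K_r$ saving $r(r-2)$ over its $\binom{r}{2}$ single edges --- is exactly the mechanism the paper itself uses in Section~4. But you stop at a saving of order $n^2/24$ and correctly observe this falls short of the required $n^2/10$; the proposed rescue (a case analysis on $\omega(G)$, $\omega(\overline G)$ with Ramsey cliques) is not carried out and is not obviously workable. What actually closes the gap is a quantitative packing theorem: one needs $n^2/30+o(n^2)$ \emph{edge-disjoint} monochromatic triangles (each saves $3$), and results of this strength (culminating in Keevash--Sudakov's $\frac{365}{4704}n^2$, which the paper uses to push $9/10$ down to $\frac{1203}{1568}$) are nontrivial theorems, not consequences of Goodman plus greedy. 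So this step is a genuine missing ingredient, not a routine computation.

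The lower bounds are in worse shape. No construction is exhibited, and your guiding assumption that the extremal example is balanced with $\scp(G)\approx\scp(\overline G)\approx\frac{31}{100}n^2$ is inconsistent with the theorem itself: if $a+b=\frac{31}{50}$ and $ab=\frac{12}{125}$ then $\{a,b\}=\{\frac{8}{25},\frac{3}{10}\}$, an \emph{unbalanced} split (consistent with, e.g., a blow-up of $P_5$ by independent sets, whose $8$ bipartite pieces give $\scp(G)=8(n/5)^2$, rather than a self-complementary balanced structure). A balanced construction achieving $\frac{31}{100}n^2$ on each side would give a product of $\frac{961}{10000}n^4$, strictly better than the stated $\frac{960}{10000}n^4$, which is a sign you are not describing the intended example. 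More fundamentally, a lower bound on $\max\{\scp(G)+\scp(\overline G)\}$ requires a \emph{lower} bound on $\scp(\overline G)$ for the chosen $G$, and $\overline G$ is a clique blow-up full of triangles, so the only tool you invoke (triangle-free pieces force $\scp=2|E|$) does not apply to it. You would need a genuine lower-bound technique for $\scp$ of dense graphs (e.g.\ vertex-valency or fractional counting arguments), and none is proposed.
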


In the following result we improve the upper bounds, from 0.9 to less than 0.77
 and from 0.2025 to less than 0.15.

\begin{theorem}\label{thm:scp}
For every graph $G$ with $n$ vertices,
 $$\scp(G)+\scp(\overline{G})\leq \frac{1203}{1568} n^2+o(n^2) < 0.76722 \, n^2+o(n^2)$$
  and
 $$\scp(G)\cdot\scp(\overline{G})\leq \frac{1447209}{9834496} n^4+o(n^4)
  < 0.1471564 \, n^4+o(n^4) \, .$$
\end{theorem}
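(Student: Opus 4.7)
The product inequality is an immediate consequence of the sum inequality by AM--GM: if $\scp(G)+\scp(\overline{G})\le S$, then $\scp(G)\cdot\scp(\overline{G})\le S^2/4$. Since $\bigl(\tfrac{1203}{3136}\bigr)^{2}=\tfrac{1447209}{9834496}$, a sum bound with leading coefficient $1203/1568$ automatically yields a product bound with leading coefficient $1447209/9834496$ (and the $o(n^2)$ error passes to $o(n^4)$). So the plan concentrates on the sum bound.

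\textbf{Baseline and role of large cliques.}
Covering every edge of $G$ (resp.\ $\overline{G}$) by a single $K_2$ gives the trivial estimate $\scp(G)+\scp(\overline{G})\le 2\binom{n}{2}=n^2-n$, i.e.\ leading coefficient~$1$. Every $K_k$ used in a partition replaces $\binom{k}{2}$ copies of $K_2$ and thereby saves $k(k-2)$ in the sum. Hence improving the coefficient past~$1$ is quantitatively equivalent to extracting a total saving of $\Theta(n^2)$ from large cliques in $G$ and/or~$\overline{G}$; the target $1203/1568$ corresponds to a total saving of about $\tfrac{365}{1568}\,n^2\approx 0.233\,n^2$.

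\textbf{Strategy.}
I would proceed by structural decomposition applied to $G$ and $\overline{G}$ \emph{simultaneously}. The main mechanism is a dichotomy: either one of the two graphs contains a clique of order $\Omega(n)$ (in which case the clique already produces a large saving and we recurse on the residual vertex set), or both graphs are ``moderately dense'', in which case each contains a large complete bipartite subgraph $K_{a,b}$ via a K\H{o}v\'ari--S\'os--Tur\'an estimate. A $K_{a,b}$ in $G$ corresponds on the $\overline{G}$-side to a pair of induced cliques of sizes $a$ and $b$ covering $\binom{a}{2}+\binom{b}{2}$ edges of $\overline{G}$ at sum-cost only $a+b$, i.e.\ at rate $\approx 2/(a+b-1)$ per edge instead of the naive~$2$. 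Iterating this extraction and combining it with the $0.9\,n^2$ bound of \cite{IMC45} applied to the leftover at each step, one controls the aggregate sum-cost by a potential-function argument whose weights are tuned to match the two savings rates.

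\textbf{Main obstacle.}
The crux will be parameter calibration. The coefficient $1203/1568$ is in lowest terms ($1568=2^{5}\cdot 7^{2}$, $1203=3\cdot 401$), so it almost certainly arises as the extremum of a multi-regime optimization: the threshold separating the ``large-clique'' and ``bipartite-extraction'' cases, the depth of the iteration, and the internal choice $a,b$ within K\H{o}v\'ari--S\'os--Tur\'an. Setting up these regimes so that the worst case realizes exactly $1203/1568$, while letting the $o(n^2)$ term absorb boundary losses, the integrality of partition sizes, and the final recursive remainder, is where I expect the technical heart of the proof to lie.
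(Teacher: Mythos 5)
Your reduction of the product bound to the sum bound via AM--GM is exactly right and matches the paper (indeed $\bigl(\tfrac{1203}{3136}\bigr)^2=\tfrac{1447209}{9834496}$), and your bookkeeping is also correct: starting from the trivial $2\binom{n}{2}$ baseline, a $K_k$ in the partition saves $k(k-2)$, so the target coefficient $\tfrac{1203}{1568}$ amounts to extracting a total saving of $\tfrac{365}{1568}n^2\approx 0.233\,n^2$. But the mechanism you propose for achieving that saving is not carried out and, as sketched, would not work. The paper's entire proof is a single citation: by the theorem of Keevash and Sudakov, every $2$-coloring of $E(K_n)$ contains at least $cn^2-o(n^2)$ pairwise \emph{edge-disjoint} monochromatic triangles with $c=\tfrac{365}{4704}$; each such triangle saves $3$, giving the saving $3c\,n^2=\tfrac{365}{1568}n^2$ exactly. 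The mysterious-looking constant is not the outcome of a multi-regime optimization on your side of the argument --- it is inherited verbatim from that packing theorem.

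Two concrete problems with your proposed alternative. First, the dichotomy's ``large clique'' branch essentially never fires: for a random graph both $G$ and $\overline{G}$ have clique number $O(\log n)$, so all the weight falls on the bipartite branch. Second, that branch is flawed: K\H{o}v\'ari--S\'os--Tur\'an produces a \emph{not-necessarily-induced} $K_{a,b}$ in $G$, and its two sides are cliques in $\overline{G}$ only if they are independent sets in $G$, which nothing guarantees; moreover, since a clique \emph{partition} requires every edge to lie in exactly one clique, you must extract $\Theta(n^2)$ constant-size cliques that are pairwise edge-disjoint, and your sketch never addresses edge-disjointness --- that is precisely the hard content supplied by the Keevash--Sudakov packing result. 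Without that (or an equivalent edge-disjoint monochromatic-triangle packing lemma), the sum bound is not established.
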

\begin{proof}
Substantially improving on earlier estimates, P. Keevash and B. Sudakov \cite{KeeSud}
 proved via a computer-aided calculation
  that every edge 2-coloring of $K_n$ contains at least $cn^2-o(n^2)$
 mutually edge-disjoint monochromatic triangles,\footnote{In the Abstract
  of \cite{KeeSud} the authors announce the lower bound $n^2\!/13$, and in their
   Theorem 1.1 they state $n^2\!/12.89$ (the rounded form of $\frac{9}{116}n^2$,
   but actually on p.~212 they prove the even better lower bound displayed above.)}
  where
 $$
   c = \frac{13}{196} + \frac{1}{84} - \frac{1}{1568} = \frac{365}{4704} \, .
 $$
In our context this means that we can select approximately $cn^2$ triangles
 which together cover $3cn^2$ edges in $G$ and $\overline{G}$ at the cost of
 $3cn^2$.
The remaining edges will be viewed as copies of $K_2$ in the clique partition
 to be constructed; they are counted with weight 2 in scp. In this way we obtain
 $$
   \scp(G)+\scp(\overline{G})\leq \left( 1-3c \right) n^2 + o(n^2)
     = \frac{1203}{1568} n^2 + o(n^2) \, .
 $$
This also implies the upper bound on $\scp(G)\cdot\scp(\overline{G})$.
\end{proof}

\begin{remark}
The smallest number of cliques in a clique partition of $ G $ is called the {\it clique partition number} of $ G $. As a Nordhaus-Gaddum type inequality for parameter $\cp$,
D.~de~Caen et al.\ proved in \cite{caen} that
\begin{align*}
\cp(G)+\cp(\overline{G})&\leq\frac{13}{30}n^2-O(n) \approx 0.43333\, n^2-O(n) \, , \\
\cp(G)\cdot\cp(\overline{G})&\leq\frac{169}{3600}n^4+O(n^3) \approx 0.0469444\, n^2+O(n^3) \, .
\end{align*}
Note that if it is possible to select some $k$ edge-disjoint complete subgraphs
 in $G$ and $\overline{G}$ which together cover $m$ edges, then
 $\cp(G)+\cp(\overline{G})\le {n\choose 2}+k-m$.
As observed within the proof of Theorem \ref{thm:scp}, the choices
 $k=\frac{365}{4704}n^2-o(n^2)$ and $m=3k$ are feasible for every $G$ on $n$ vertices, thus
\begin{align*}
\cp(G)+\cp(\overline{G})&\leq\left(\frac12-\frac{365}{2352}\right)n^2+o(n^2)
  =\frac{811}{2352}n^2+o(n^2) < 0.344813\, n^2+o(n^2) \,,\\
\cp(G)\cdot\cp(\overline{G})&\leq\frac{657721}{22127616}n^4+o(n^4) < 0.029724\, n^4+o(n^4) \,.
\end{align*}
 These upper bounds improve the results of \cite{caen}.
\end{remark}


\begin{thebibliography}{10}
\bibitem{SurveyNG}
M.~Aouchiche and P.~Hansen,
\newblock   A survey of Nordhaus-Gaddum type relations,
\newblock{\em Discrete Appl. Math.}, 161(4-5):466--546, 2013.
	
	
\bibitem{balogh}
J.~Balogh, A.~Kostochka, N.~Prince, and M.~Stiebitz,
\newblock The Erd\H os-Lov\'asz Tihany Conjecture for quasi-line graphs,
\newblock {\em Discrete Math.}, 309:3985--3991, 2009.


\bibitem{Bermond}
J.~C.~Bermond and J.~C.~Meyer,
\newblock Graphe repr\'{e}sentatif des ar\^{e}tes d'un multigraphe,
\newblock {\em J. Math. Pures Appl.},
 52(3):299--308, 1973.
%


%
\bibitem{chung81}
F.~R.~K. Chung,
\newblock On the decomposition of graphs,
\newblock {\em SIAM J. Alg. Disc. Meth.}, 2(1):1--12, 1981.



\bibitem{IMC45}
A.~Davoodi and R. Javadi,
\newblock Nordhaus-Gaddum inequalities for sigma clique partition number of graphs,
\newblock {\em The 45$ ^{th}$ Annual Iraninan Mathematics Conference}, {Semnan University, Aug. 26--29, 2014}.

\bibitem{SCPandPBD}
A.~Davoodi, R.~Javadi, and B.~Omoomi,
\newblock Pairwise balanced designs and sigma clique partitions,
\newblock {\em Discrete Math.}, 339:1450--1458, 2016.


\bibitem{caen} {D.~de Caen, P.~Erd\H{o}s, N.~J.~Pullmann, and N.~C.~Wormald,}
 \newblock {Extremal clique coverings of complementary graphs},
 \newblock {\em Combinatorica}, 6(4):309--314, 1986.


\bibitem{dirac}   G. A. Dirac, 
\newblock On rigid circuit graphs,
\newblock {\em Abhandlungen aus dem Mathematischen Seminar der Universit{\"a}t Hamburg}, 25(1), 71--76, 1961.


\bibitem{Faudree}
R.~Faudree, E.~Flandrin, and Z.~Ryj\'a\v{c}ek,
\newblock Claw-free graphs --- A survey,
\newblock {\em Discrete Math.}, 164(1-3):87--147, 1997.
%

\bibitem{Finck}
H.~J., Finck
\newblock On the chromatic numbers of a graph and its complement,
\newblock {\em Theory of Graphs}, 99--113, 1968.



\bibitem{gyori80}
E.~Gy{\H{o}}ri and A.~V. Kostochka,
\newblock On a problem of {G}. {O}. {H}. {K}atona and {T}. {T}arj\'{a}n,
\newblock {\em Acta Math. Acad. Sci. Hungar.}, 34(3-4):321--327, 1979.




\bibitem{JavadiLCC}
R.~Javadi, Z.~Maleki, and B.~Omoomi,
\newblock Local clique covering of claw-free graphs,
\newblock {\em J. Graph Theory}, 81(1):92--104, 2016.


\bibitem{kahn81}
J.~Kahn,
\newblock Proof of a conjecture of {K}atona and {T}arj\'an,
\newblock {\em Period. Math. Hungar.}, 12(1):81--82, 1981.


\bibitem{KeeSud}
P. Keevash and B. Sudakov,
\newblock Packing triangles in a graph and its complement,
\newblock {\em J Graph Theory}, 47:203--216, 2004.

\bibitem{Lehot}
P.~G.~H.~Lehot,
\newblock{ An optimal algorithm to detect a line graph and output its root graph},
\newblock{\em  J. Assoc. Comput. Mach.}, 21(4):569--575, 1974.


\bibitem{Lovasz}
L.~Lov\'{a}sz,
\newblock Problem 9: Beitr\"{a}ge zur Graphentheorie und deren Anwendungen,
\newblock {\em Technische Hochschule Ilmenau}, International Colloquium held in Oberhof, 1977.
%


\bibitem{nord}
E. A. Nordhaus and  J. W. Gaddum,
\newblock On complementary graphs,
\newblock {\em Amer. Math. Monthly}, 63:175--177, 1956.


\bibitem{Poljak}
S.~Poljak, V.~R\"{o}dl, and D.~Turz\'\i k,
\newblock Complexity of representation of graphs by set systems,
\newblock {\em Discrete Appl. Math.}, 3(4):301--312, 1981.



\end{thebibliography}
\end{document}